 \newtheorem{thm}{Theorem}[section]
 \newtheorem{cor}[thm]{Corollary}
 \newtheorem{lem}[thm]{Lemma}
 \theoremstyle{definition}
 \newtheorem{defn}[thm]{Definition}
 \theoremstyle{remark}
 \newtheorem*{ex}{Example}
 \numberwithin{equation}{section}
\begin{document}

%
%
%
%
%
%
%
%
%

\title{$3$-connected graphs and their degree sequences}

\author{Jonathan McLaughlin}

\address{%
Department of Mathematics, \\ 
St. Patrick's College,\\
 Dublin City University,\\ 
 Dublin 9, \\ 
 Ireland }

\email{jonny$\_\;$mclaughlin@hotmail.com}


\subjclass{Primary 05C40}

\keywords{3-connected graph, degree sequence, graph construction, BG-operations}

\date{\today}

\begin{abstract} Necessary and sufficient conditions for a sequence of positive integers to be the degree sequence of a $3$-connected simple graph are detailed. Conditions are also given under which such a sequence is necessarily $3$-connected i.e. the sequence can only be realised as a $3$-connected graph. Finally, a matrix is introduced whose non-empty entries partition the set of $3$-connected graphs.  
\end{abstract}

\maketitle

\section{Introduction}
\parindent=0cm

Necessary and sufficient conditions for a sequence of non-negative integers to be connected i.e. the degree sequence of some finite simple connected graph, are implicit in Hakimi \cite{Hk} and have been stated explicitly by the author in \cite{Me15}. This note builds upon these conditions of Hakimi and begins by describing an iterative construction of a $3$-connected graph (in which all intermediate graphs are also $3$-connected), due to Barnette and Gr\"unbaum \cite{BG2}. The set of all $3$-connected graphs is then partitioned into equivalence classes each of which is placed separately in a unique entry of a matrix $P_{\mathcal{G}_3}$. The relationship between entries in $P_{\mathcal{G}_3}$ and the Barnette and Gr\"unbaum construction is then described. Necessary and sufficient conditions are given for a sequence of non-negative integers to be $3$-connected and/or necessarily $3$-connected.  Finally, the relationship between entries in $P_{\mathcal{G}_3}$ and (necessarily) $3$-connected degree sequences is outlined.

\section{Preliminaries }

Let $G=(V_{G},E_{G})$ be a graph where $V_{G}$ denotes the vertex set of $G$ and $E_{G}\subseteq [V_{G}]^{2}$ denotes the edge set of $G$ (given that $[V_G]^2$ is the set of all $2$-element subsets of $V_G$).  An edge $\{a,b\}$ is denoted $ab$. A graph is finite when $|V_{G}|<\infty$ and $|E_{G}|<\infty$, where $|X|$ denotes the cardinality of the set $X$. The union of graphs $G$ and $H$ i.e. $(V_{G}\cup V_{H}, E_{G}\cup E_{H})$, is denoted $G\cup H$. By a slight abuse of notation, $ab\cup G$ is understood to be the graph $(\{a,b\},\{ab\})\cup (V_G, E_G)$. A graph is simple if it contains no loops (i.e. $aa\not\in E_{G}$) or parallel/multiple edges (i.e. $\{ab,ab\}\not\subseteq E_{G}$). The {\it degree} of a vertex $v$ in a graph $G$, denoted $deg(v)$, is the number of edges in $G$ which contain $v$. A graph where all vertices have degree $k$ is called a {\it $k$-regular} graph. A {\it path} is a graph with $n$ vertices in which two vertices, known as the {\it endpoints}, have degree $1$ and $n-2$ vertices have degree $2$. A graph is {\it connected} if there exists at least one path between every pair of vertices in the graph. Paths $P_1$ and $P_2$, both with endpoints $a$ and $b$, are {\it internally disjoint} if $P_1\cap P_2=(\{a,b\},\{\})$. A graph $G$ is $3${\it -connected} when there exists at least $3$ internally disjoint paths in $G$ between any two vertices in $G$. A {\it tree} is a connected graph with $n$ vertices and $n-1$ edges. $K_{n}$ denotes the {\it complete graph} on $n$ vertices. All basic graph theory definitions can be found in standard texts such as \cite{BM}, \cite{D} or \cite{GG}. All graphs in this work are assumed to be simple, undirected and finite.

\section{Constructing $3$-connected graphs}

Barnette and Gr\"unbaum introduce three operations on $3$-connected graphs in \cite{BG2} which are used to construct a $3$-connected graph from $K_4$ where all intermediate graphs are also $3$-connected. These operations are collectively referred to as {\it BG-operations} and are described in Definition \ref{BGdef}. Each operation is also individually named based on the number of additional vertices and edges, respectively, that are added to the $3$-connected graph by the operation i.e. the $(2,3)$-operation on a $3$-connected graph $H$ adds $2$ vertices to $V_{H}$ and $3$ edges to $E_{H}$. Before defining BG-operations note that an edge $uw\in E_{G}$ is {\it subdivided} whenever $uw$ is removed from $E_{G}$, a vertex $v$ is added to $V_{G}$ and the edges $uv$ and $vw$ are added to $E_{G}$.

\begin{defn}\label{BGdef} 
Given a $3$-connected graph $H$ then a BG-operation on $H$ is one of the following operations: 
\begin{itemize}
\item$(0,1)$-operation: add an edge $ab$ to $H$ such that $a,b\in V_H$ but $ab\not\in E_{H}$.
\item$(1,2)$-operation: subdivide an edge $xy\in E_H$ by adding a vertex $a\not\in V_H$, then adding the edge $ab\not\in E_H$ such that $b\in V_{H}$ and $b\neq x,y$.
\item$(2,3)$-operation: subdivide edges $xy$ and $wz$, where $xy\neq wz$, by adding vertices $a$ and $b$, respectively, then adding the edge $ab$.
\end{itemize}
\end{defn}

In the literature the operations described in Definition \ref{BGdef}, and shown in {\sc Figure} \ref{Cases2}, are called {\it basic} BG-operations as they do not result in the introduction of parallel edges.

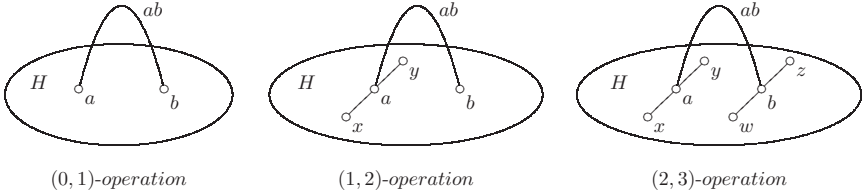
\begin{figure}[h]
\begin{center}
\scalebox{0.75}{$\begin{xy}\POS (0,5) *\cir<2pt>{} ="a" *+!UL{a},
(15,5) *\cir<2pt>{} ="b" *+!UL{b},
  (-7,5)*+!{H},
  (13,18)*+!{ab},
  (7,-12)*+!{(0,1){\text -operation}},

\POS "a" \ar@/^3.5pc/@{-}  "b",

\POS(7,4),  {\ellipse(20,9)<>{}},

\POS (57,10) *\cir<2pt>{} ="y" *+!UL{y},
(47,0) *\cir<2pt>{} ="x" *+!UL{x},
(52,5) *\cir<2pt>{} ="a" *+!UL{a},
(67,5) *\cir<2pt>{} ="b" *+!UL{b},
  (40,5)*+!{H},
  (65,18)*+!{ab},
   (57.5,-12)*+!{(1,2){\text -operation}},

\POS "y" \ar@{-}  "a",
\POS "a" \ar@{-}  "x",
\POS "a" \ar@/^3.5pc/@{-}  "b",

\POS(57.5,4),  {\ellipse(24,9)<>{}},

 \POS (110,10) *\cir<2pt>{} ="y" *+!UL{y},
(100,0) *\cir<2pt>{} ="x" *+!UL{x},
(105,5) *\cir<2pt>{} ="a" *+!UL{a},
(125,10) *\cir<2pt>{} ="z" *+!UL{z},
(115,0) *\cir<2pt>{} ="w" *+!UL{w},
(120,5) *\cir<2pt>{} ="b" *+!UL{b},
  (95,5)*+!{H},
  (118,18)*+!{ab},
   (112.5,-12)*+!{(2,3){\text -operation}},

\POS "y" \ar@{-}  "a",
\POS "a" \ar@{-}  "x",
\POS "z" \ar@{-}  "b",
\POS "b" \ar@{-}  "w",
\POS "a" \ar@/^3.5pc/@{-}  "b",

\POS(112.5,4),  {\ellipse(25,9)<>{}},

 \end{xy}$}
\caption{The BG-operations.}
\label{Cases2}
\end{center}
\end{figure}

\begin{thm}\label{BGthrm} A graph $G$ is $3$-connected if and only if it can be constructed from $K_{4}$ using BG-operations. \end{thm}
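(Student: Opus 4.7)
The plan is to prove the two directions separately, with the reverse direction being substantially more subtle than the forward one.

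For the sufficient direction, I would induct on the number of BG-operations applied, starting from the base case that $K_4$ is $3$-connected. For the inductive step, let $H$ be a $3$-connected graph and suppose $G$ is obtained from $H$ by one BG-operation. Using Menger's theorem, it suffices to exhibit three internally disjoint paths between any two vertices of $G$. The $(0,1)$-operation trivially preserves $3$-connectivity, since adding an edge cannot decrease the number of internally disjoint paths between any pair. For $(1,2)$ and $(2,3)$, I would do case analysis based on whether the chosen pair of vertices includes the newly added vertex $a$ (or $a$ and $b$), in each case constructing the required three internally disjoint paths from the three disjoint paths guaranteed in $H$, rerouting through $a$ (or through $a$ and $b$ along the new edge $ab$) whenever a suppressed edge $xy$ or $wz$ would otherwise be used.

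For the necessary direction, I would induct on $|V_G|+|E_G|$. The base case is $G=K_4$. For the inductive step, given a $3$-connected $G\neq K_4$, I must produce a smaller $3$-connected graph $G'$ from which $G$ arises by a single BG-operation; equivalently, I must apply an inverse BG-operation. The three inverse operations are: deleting an edge $ab$ where $\deg(a),\deg(b)\geq 4$ (inverse $(0,1)$); suppressing a degree-$3$ vertex $a$ with neighbours $x,y,b$ by removing $a$ together with $ab$ and reinstating $xy$ (inverse $(1,2)$); and contracting an induced path $x\text{-}a\text{-}b\text{-}w$ where $a,b$ are adjacent degree-$3$ vertices (inverse $(2,3)$). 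The central claim to establish is that for any $3$-connected $G\neq K_4$, at least one such inverse applies while preserving both $3$-connectivity and simplicity of the reduced graph.

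The main obstacle lies in this existence claim. I would split on whether $G$ contains a vertex of degree $3$. If none exists, so $\delta(G)\geq 4$, I would invoke the theorem (essentially Tutte's) that a $3$-connected graph with minimum degree at least $4$ always contains an edge whose deletion preserves $3$-connectivity, giving an inverse $(0,1)$. If a degree-$3$ vertex $a$ with neighbours $\{x,y,b\}$ does exist, I would use Menger's theorem to verify that suppressing $a$ yields a $3$-connected graph, choosing between inverse $(1,2)$ and inverse $(2,3)$ according to the adjacencies among $x,y,b$ and their degrees, in order to avoid creating parallel edges. The delicate point — and the one needing the most care — is maintaining simplicity; specifically, when reinstating $xy$ would create a parallel edge, one must fall back on an adjacent degree-$3$ neighbour of $a$ and contract the two-vertex path, which is exactly the situation the $(2,3)$-inverse is designed to handle. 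Showing that one of these two reductions is always available when $G\neq K_4$ is the crux of the argument.
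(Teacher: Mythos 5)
The paper does not prove this theorem at all: it is imported from Barnette--Gr\"unbaum and Tutte, with the references \cite{BG2}, \cite{T2}, \cite{Sch} cited for details. So your proposal can only be judged on its own merits, and as it stands it has a genuine gap. The forward direction (BG-operations preserve $3$-connectivity) is fine and routine. The problem is the converse, where you yourself flag the existence of an applicable inverse operation as ``the crux'' and then do not establish it; a sketch that defers exactly the hard step is not yet a proof. Worse, the case analysis you propose is demonstrably incomplete. Take $G = K_5$ minus one edge $ac$. Here $a$ has degree $3$ and its neighbourhood $\{x,y,b\}$ induces a triangle, so every candidate inverse $(1,2)$-operation would reinstate an edge that is already present, violating simplicity; and the only two degree-$3$ vertices ($a$ and $c$) are non-adjacent, so no inverse $(2,3)$-operation is available either. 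Since $\delta(G)=3$, your plan never reaches the edge-deletion case, yet edge deletion (an inverse $(0,1)$-operation, e.g.\ deleting $xy$ to obtain the $4$-wheel) is the \emph{only} reduction that works for this graph. So the dichotomy ``if $\delta\geq 4$ delete an edge, otherwise suppress a degree-$3$ vertex'' is wrong as stated: one must allow edge deletion even in the presence of degree-$3$ vertices, and the argument that \emph{some} admissible reduction always exists for $G\neq K_4$ is precisely the content of the Barnette--Gr\"unbaum theorem, not a corollary of Halin's result on minimally $3$-connected graphs.

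A secondary, related issue: even when a degree-$3$ vertex $a$ with a non-complete neighbourhood exists, you assert via Menger that the suppression yields a $3$-connected graph, but the choice of which pair of neighbours of $a$ receives the reinstated edge matters (in the triangular prism, two of the three choices drop another vertex to degree $2$), and showing that a good choice always exists --- or that one can instead fall back on an inverse $(2,3)$- or $(0,1)$-operation --- is again the heart of the matter. To repair the proof you would need to state and prove the existence lemma in full: for every $3$-connected simple $G\neq K_4$ there is an edge whose deletion, a degree-$3$ vertex whose suppression, or an adjacent pair of degree-$3$ vertices whose contraction yields a smaller $3$-connected simple graph. That is essentially the argument in \cite{BG2} and \cite{T2}, and it cannot be waved through.
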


No loss of generality is incurred by using only basic BG-operations as every multigraph has a maximal {\it simple} graph which can be constructed from $K_4$ using {\it basic} BG-operations. Hence, it simply requires the addition of the appropriate parallel edges and loops to complete the construction of the required multigraph. Further details on Theorem \ref{BGthrm} can be found in \cite{BG2} and \cite{T2} as well as in \cite{Sch}.

\section{Listing all $3$-connected graphs}\label{s4}

Let $\mathcal{G}_3$ be the set of all (unlabelled) $3$-connected simple graphs. Define a graph equivalence relation $\sim$ as $G\sim H$ if and only if $|V_G|=|V_H|$ and $|E_G|=|E_H|$.\\

Consider a matrix $P=\big( p_{i,j}\big)_{i \in \mathbb{Z} ,j\in \mathbb{N}_0}$ such that each non-empty entry $p_{i,j}\in P$ contains a single equivalence class (with respect to $\sim$) with the entry $p_{0,0}$ containing $K_4$. \\

The equivalence classes are arranged in $P$ as follows:
\begin{itemize}
\item Each $G\in p_{i,j}$ is a maximal proper subgraph of some $G'\in p_{i,j+1}$ such that $V_{G}\subset V_{G'}$. 
\item Each $G\in p_{i,j}$ is a maximal proper subgraph of some $G''\in p_{i+1,j}$ such that $V_{G}= V_{G''}$ and $E_{G}\subset E_{G''}$. 
\item If $p_{i,j}$ contains a maximal complete graph, then $p_{i+1,j},\; p_{i+2,j}, \; ...$ are empty.
\end{itemize}

Such an arrangement of equivalence classes within $P$ defines the following relationships between vertex and edge set cardinalities: 
\begin{itemize}
\item Assuming that $p_{i,j}$ does not contain a maximal complete graph, then for any $G\in p_{i,j}$ there exists some $G^{*}\in p_{i+1,j}$ such that $G\subset G^{*}$ with $|V_{G^{*}}|= |V_{G}|$ and $ |E_{G^{*}}|=|E_{G}|+1$. See {\sc Figure} \ref{Poly}.
\item For any $G\in p_{i,j}$ there exists some $G^{**}\in p_{i,j+1}$ such that $G\subset G^{**}$ with $ |V_{G^{**}}|=|V_{G}|+1$ and $ |E_{G^{**}}|=|E_{G}|+3$ (when adding a vertex to a $3$-connected graph, so that $3$-connectedness is preserved, at least $3$ additional edges must also be added). See {\sc Figure} \ref{Poly}. 
\end{itemize}

 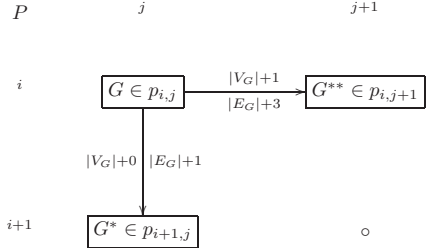
\begin{figure}[h]
\begin{center}\scalebox{0.75}{$
\xymatrix{P & ^{j} & & ^{j+1} &    \\
 ^{i}   &    *+[F]{G\in p_{i,j} }  \ar@{->}^{|V_G|+1}_{|E_G|+3}[rr] \ar@{->}_{|V_G|+0}^{|E_G|+1}[dd] & & *+[F]{G^{**}\in p_{i,j+1}}  &  \\
 & & & \\
^{i+1}   &     *+[F]{G^{*}\in p_{i+1,j} } & &  \circ   & \\ 
}$ }  
\caption{The relationship between graphs in adjacent entries in $P$.}
\label{Poly}
\end{center}
\end{figure}
 
For each $G\in p_{i,j}$ it is possible to state $|V_G|$ and $|E_G|$ in terms of $i$ and $j$.\\

As $p_{0,0}$ contains $K_4$ and all entries in the same column of $P$ contain graphs with the same number of vertices, then column $j$ contains graphs with $j+4$ vertices, where $j\in\mathbb{N}_0$. \\

Recall that $p_{0,0}$ contains $K_4$ and that $|E_{K_4}|={4\choose 2}$. So that $3$-connectedness is preserved then graphs in $p_{0,1}$ have $4+1$ vertices and ${4\choose 2}+3$ edges. It follows that graphs in entry $p_{0,j}$ have $4+j$ vertices and ${4\choose 2}+3j$ edges. Observe that graphs in $p_{1,j}$ (if they exist) contain $1$ more edge than graphs in $p_{0,j}$ and that graphs in $p_{-1,j}$ (if they exist) contain $1$ edge less than graphs in $p_{0,j}$. As rows are labelled using $\mathbb{Z}$ then the entry $p_{i,j}$ contains graphs with ${4\choose 2}+3j+i$ edges.   \\

It is now possible to define $P_{\mathcal{G}_{3}}$, the {\it partition matrix of} $\mathcal{G}_{3}$.

\begin{defn} Let $\mathcal{G}_{3}$ be the set of all $3$-connected simple graphs, then $P_{\mathcal{G}_{3}}$, the partition matrix of $\mathcal{G}_{3}$, is the matrix \[P_{\mathcal{G}_{3}} :=\big( p_{i,j}\big)_{i\in \mathbb{Z}, \; j\in \mathbb{N}_0}\]
such that \[ p_{i,j} :=\{G\in \mathcal{G}_{3} \; \mid \;  |V_{G}|=j+4\;\; {\textrm and } \;\; |E_{G}|= i +3j + 6  \}.\]
\end{defn}

A portion of $P_{\mathcal{G}_{3}}$ in shown in {\sc Figure} \ref{Gamma}.

\begin{figure}[h]
\[\scalebox{0.9}{$P_{\mathcal{G}_{3}}= \left( \begin{array}{ccccc}
\vdots & \vdots & \vdots &\vdots &  \\
p_{-4,0}& p_{-4,1} & p_{-4,2} & p_{-4,3} & \cdots \\
p_{-3,0}& p_{-3,1} & p_{-3,2} & p_{-3,3} & \cdots \\
p_{-2,0} & p_{-2,1} & p_{-2,2} & p_{-2,3} & \cdots \\
p_{-1,0} & p_{-1,1} & p_{-1,2} & p_{-1,3} & \cdots \\
p_{0,0} & p_{0,1} & p_{0,2} & p_{0,3} & \cdots \\
p_{1,0} & p_{1,1} & p_{1,2} & p_{1,3} & \cdots \\
p_{2,0} & p_{2,1} & p_{2,2} & p_{2,3} & \cdots \\
p_{3,0}& p_{3,1} & p_{3,2} & p_{3,3} & \cdots \\
p_{4,0}& p_{4,1} & p_{4,2} & p_{4,3} & \cdots \\
\vdots & \vdots & \vdots &\vdots & \ddots \\
\end{array} \right) = \left( \begin{array}{ccccc}
\vdots & \vdots & \vdots &\vdots &  \\
\varnothing & \varnothing & \varnothing & p_{-4,3} & \cdots \\
\varnothing & \varnothing & p_{-3,2} & p_{-3,3} & \cdots \\
\varnothing & \varnothing & p_{-2,2} & p_{-2,3} & \cdots \\
\varnothing & p_{-1,1} & p_{-1,2} & p_{-1,3} & \cdots \\
K_4 & p_{0,1} & p_{0,2} & p_{0,3} & \cdots \\
\varnothing & K_5 & p_{1,2} & p_{1,3} & \cdots \\
\varnothing & \varnothing & p_{2,2} & p_{2,3} & \cdots \\
\varnothing & \varnothing & K_6 & p_{3,3} & \cdots \\
\varnothing & \varnothing & \varnothing & p_{4,3} & \cdots \\
\vdots & \vdots & \vdots &\vdots & \ddots \\
\end{array} \right) $} \] 
\caption{The partition matrix $P_{\mathcal{G}_{3}}$ of $\mathcal{G}_{3}$.}
\label{Gamma}
\end{figure}

\section{BG-operations and $P_{\mathcal{G}_{3}}$}

It is now possible to see how entries in $P_{\mathcal{G}_{3}}$ are related in terms of the three BG-operations.
\begin{itemize}
\item Performing a $(0,1)$-operation on some $G\in p_{i,j}$ results in some $G'\in p_{i+1,j}$ such that $|V_{G'}|=|V_{G}|$ and $|E_{G'}|=|E_{G}|+1$. 
\item Performing a $(1,2)$-operation on some $G\in p_{i,j}$ results in some $G''\in p_{i-1,j+1}$ such that $|V_{G''}|=|V_{G}|+1$ and $|E_{G''}|=|E_{G}|+2$.
\item Performing a $(2,3)$-operation on some $G\in p_{i,j}$ results in some $G'''\in p_{i-3,j+2}$ such that $|V_{G'''}|=|V_{G}|+2$ and $|E_{G'''}|=|E_{G}|+3$. 
\end{itemize}
The relationship described in the three points above is illustrated in {\sc Figure} \ref{Poly2}.

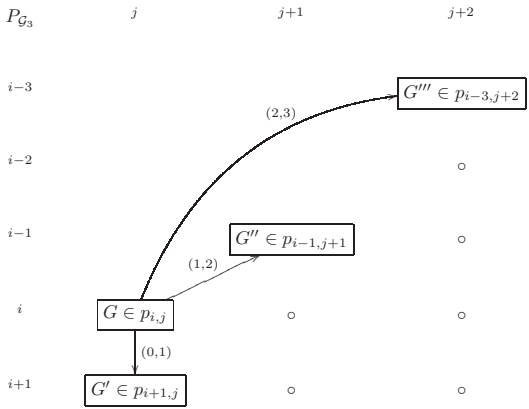
\begin{figure}[h]
\begin{center}\scalebox{0.7}{$
\xymatrix{P_{\mathcal{G}_{3}} & ^{j} & ^{j+1} & ^{j+2} &  \\
^{i-3}   & & & *+[F]{G'''\in p_{i-3,j+2}} &    \\
^{i-2}   &  &  & \circ &   \\
^{i-1}   &  & *+[F]{G'' \in p_{i-1,j+1}} & \circ &  \\
 ^{i}   &    *+[F]{G\in p_{i,j} }  \ar@/^3.2pc/@{->}^>>>>>>>>>>>>{(2,3)}[uuurr]\ar@{->}^{(1,2)}[ur] \ar@{->}^{(0,1)}[d] & \circ &  \circ &   \\
^{i+1}   &     *+[F]{G'\in p_{i+1,j} } &  \circ & \circ &   \\ 
}$ }  
\caption{The relationship between BG-operations and entries in $P_{\mathcal{G}_{3}}$.}
\label{Poly2}
\end{center}
\end{figure}

When constructing any $3$-connected graph $G$ using BG-operations it is worth noting that the order in which the BG-operations are performed is not always arbitrary as is illustrated by the following example. 

\begin{ex}
Consider the entry $p_{-2,2}$ in $P_{\mathcal{G}_{3}} $ as shown in {\sc Figure} \ref{Gamma}. The graph $G_1$ in {\sc Figure} \ref{g1g2} is the result of a $(1,2)$-operation performed on $K_4$ followed by another $(1,2)$-operation. The graph $G_2$ in {\sc Figure} \ref{g1g2} is the result of a $(2,3)$-operation performed on $K_4$ followed by a $(0,1)$-operation. Note that no graph in $p_{-2,2}$ is the result of a $(0,1)$-operation performed on $K_4$ followed by a $(2,3)$-operation. Observe that $G_1\sim G_2$ as they are both contained in $p_{-2,2}$ but that it is not possible to construct $G_{1}$ using a $(2,3)$-operation followed by a $(0,1)$-operation and similarly, it is not possible to construct $G_{2}$ using two successive $(1,2)$-operations.

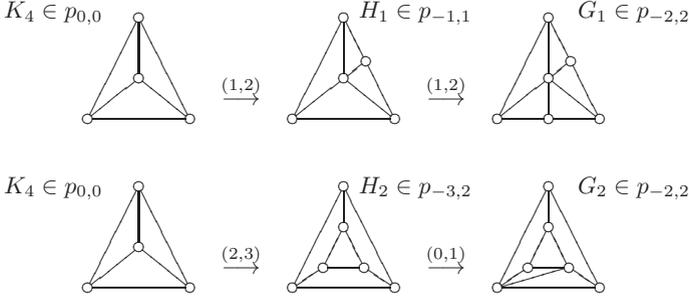
\begin{figure}[h]
\begin{center}
\scalebox{0.9}{$\begin{xy}
 \POS (0,10) *\cir<2pt>{} ="a" *+!D{} ,
 (15,10) *\cir<2pt>{} ="b" *+!R{},
 (7.5,25) *\cir<2pt>{} ="c" *+!R{},
 (7.5,16) *\cir<2pt>{} ="d" *+!L{},
 (-5,25) *+!{K_4\in p_{0,0}},
 (22.5,12) *+!{\stackrel{(1,2)}{\longrightarrow}}
 
\POS "a" \ar@{-}  "b",
\POS "a" \ar@{-}  "c",
\POS "a" \ar@{-}  "d",
\POS "b" \ar@{-}  "c",
\POS "b" \ar@{-}  "d",
\POS "c" \ar@{-}  "d",

 \POS (30,10) *\cir<2pt>{} ="a" *+!D{} ,
 (45,10) *\cir<2pt>{} ="b" *+!R{},
 (37.5,25) *\cir<2pt>{} ="c" *+!R{},
 (37.5,16) *\cir<2pt>{} ="d" *+!L{},
  (40.75,18.5) *\cir<2pt>{} ="e" *+!L{},
   (48,25) *+!{H_1 \in p_{-1,1}},
  (52.5,12) *+!{\stackrel{(1,2)}{\longrightarrow}}
 
\POS "a" \ar@{-}  "b",
\POS "a" \ar@{-}  "c",
\POS "a" \ar@{-}  "d",
\POS "b" \ar@{-}  "e",
\POS "e" \ar@{-}  "c",
\POS "b" \ar@{-}  "d",
\POS "c" \ar@{-}  "d",
\POS "e" \ar@{-}  "d",

 \POS (60,10) *\cir<2pt>{} ="a" *+!D{} ,
 (75,10) *\cir<2pt>{} ="b" *+!R{},
 (67.5,25) *\cir<2pt>{} ="c" *+!R{},
 (67.5,16) *\cir<2pt>{} ="d" *+!L{},
  (70.75,18.5) *\cir<2pt>{} ="e" *+!L{},
   (67.5,10) *\cir<2pt>{} ="f" *+!L{},
 (80,25) *+!{G_1 \in p_{-2,2}},
 
\POS "a" \ar@{-}  "f",
\POS "b" \ar@{-}  "f",
\POS "d" \ar@{-}  "f",
\POS "a" \ar@{-}  "c",
\POS "a" \ar@{-}  "d",
\POS "b" \ar@{-}  "e",
\POS "e" \ar@{-}  "c",
\POS "b" \ar@{-}  "d",
\POS "c" \ar@{-}  "d",
\POS "e" \ar@{-}  "d",

 \POS (0,-15) *\cir<2pt>{} ="a" *+!D{} ,
 (15,-15) *\cir<2pt>{} ="b" *+!R{},
 (7.5,0) *\cir<2pt>{} ="c" *+!R{},
 (7.5,-9) *\cir<2pt>{} ="d" *+!L{},
 (-5,-1) *+!{K_4 \in p_{0,0}},
  (22.5,-13) *+!{\stackrel{(2,3)}{\longrightarrow}}
 
\POS "a" \ar@{-}  "b",
\POS "a" \ar@{-}  "c",
\POS "a" \ar@{-}  "d",
\POS "b" \ar@{-}  "c",
\POS "b" \ar@{-}  "d",
\POS "c" \ar@{-}  "d",

 \POS (30,-15) *\cir<2pt>{} ="a" *+!D{} ,
 (45,-15) *\cir<2pt>{} ="b" *+!R{},
 (37.5,0) *\cir<2pt>{} ="c" *+!R{},
  (37.5,-6) *\cir<2pt>{} ="e" *+!L{},
   (34.5,-12) *\cir<2pt>{} ="f" *+!L{},
    (40.5,-12) *\cir<2pt>{} ="g" *+!L{},
     (48,-1) *+!{H_2 \in p_{-3,2}},
  (52.5,-13) *+!{\stackrel{(0,1)}{\longrightarrow}}
 
\POS "a" \ar@{-}  "b",
\POS "a" \ar@{-}  "c",
\POS "a" \ar@{-}  "f",
\POS "b" \ar@{-}  "c",
\POS "b" \ar@{-}  "g",
\POS "c" \ar@{-}  "e",
\POS "f" \ar@{-}  "e",
\POS "g" \ar@{-}  "e",
\POS "f" \ar@{-}  "g",

 \POS (60,-15) *\cir<2pt>{} ="a" *+!D{} ,
 (75,-15) *\cir<2pt>{} ="b" *+!R{},
 (67.5,0) *\cir<2pt>{} ="c" *+!R{},
  (67.5,-6) *\cir<2pt>{} ="e" *+!L{},
   (64.5,-12) *\cir<2pt>{} ="f" *+!L{},
    (70.5,-12) *\cir<2pt>{} ="g" *+!L{},
 (80,-1) *+!{G_2 \in p_{-2,2}},
 
\POS "a" \ar@{-}  "b",
\POS "a" \ar@{-}  "c",
\POS "a" \ar@{-}  "f",
\POS "b" \ar@{-}  "c",
\POS "b" \ar@{-}  "g",
\POS "c" \ar@{-}  "e",
\POS "f" \ar@{-}  "e",
\POS "g" \ar@{-}  "e",
\POS "f" \ar@{-}  "g",
\POS "a" \ar@{-}  "g",

 \end{xy}$ }
\caption{The choice of BG-operations (type or order) when constructing a $3$-connected graph is not arbitrary.}
\label{g1g2}
\end{center}
\end{figure}

  \end{ex}

\section{Degree sequences and graph partitions}\label{s5}

A finite sequence $s=\{s_1,...,s_n\}$ of non-negative integers is called {\it graphic} if there exists a finite simple graph with vertex set  $\{v_1,..., v_n\}$ such that $v_i$ has degree $s_i$ for all $i=1,...,n$. Necessary and sufficient conditions for a sequence of non-negative integers to be graphic were first described by Erd\"os and Gallai in \cite{EG}. Necessary and sufficient conditions for a sequence of non-negative integers to be connected are implicit in Hakimi \cite{Hk} and these conditions have been stated explicitly by the author in \cite{Me15}. The maximum degree of a vertex in $G$ is denoted $\Delta_G$ and the minimum degree of a vertex in $G$ is denoted $\delta_G$. Given a graph $G$ then the degree sequence $d(G)$ is the monotonic non-increasing sequence of degrees of the vertices in $V_G$. This means that every graphical sequence $s$ is equal to the degree sequence $d(G)$ of some graph $G$ (subject to possible rearrangement of the terms in $s$).

\begin{defn} A finite sequence $s=\{s_1,...,s_n\}$ of positive integers is called {\it $3$-connected} if there exists a finite simple $3$-connected graph with vertex set  $\{v_1,..., v_n\}$ such that $deg(v_i)= s_i$ for all $i=1,...,n$.
\end{defn}

Given a sequence of positive integers $s=\{s_1,...,s_n\}$ then define the {\it associated pair of $s$}, denoted $(\varphi(s),\epsilon(s))$, to be the pair $(n, \frac{1}{2}\sum\limits_{i=1}^{n}s_i)$. Where no ambiguity can arise,  $(\varphi(s),\epsilon(s))$ is simply denoted $(\varphi,\epsilon)$.

\begin{lem}\label{Ent} Given a sequence $s$ with associated pair $(\varphi,\epsilon)$ such that $s=d(G)$ for some $3$-connected graph $G\in \mathcal{G}_{3}$, then $G$ is contained in $p_{\;\epsilon-3\varphi +6, \; \varphi -4}$ in $P_{\mathcal{G}_{3}}$. 
\end{lem}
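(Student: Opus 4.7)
The plan is to observe that the associated pair $(\varphi(s),\epsilon(s))$ directly encodes the vertex and edge counts of any realising graph, and then to simply invert the linear system defining $p_{i,j}$.

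First, I would note that since $s=d(G)$ is the degree sequence of $G$, the length of the sequence equals the number of vertices, so $\varphi(s)=n=|V_G|$. By the handshake lemma, $\sum_{v\in V_G}\deg(v)=2|E_G|$, so $\epsilon(s)=\tfrac{1}{2}\sum_{i=1}^{n}s_i=|E_G|$. Thus $(|V_G|,|E_G|)=(\varphi,\epsilon)$.

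Next, I would consult the defining conditions in $P_{\mathcal{G}_3}$: for $G\in p_{i,j}$ one needs $|V_G|=j+4$ and $|E_G|=i+3j+6$. Substituting $|V_G|=\varphi$ into the first gives $j=\varphi-4$; substituting $|E_G|=\epsilon$ and $j=\varphi-4$ into the second and solving for $i$ yields $i=\epsilon-3(\varphi-4)-6=\epsilon-3\varphi+6$. Since by hypothesis $G$ is $3$-connected, it lies in some entry of $P_{\mathcal{G}_3}$ (the entries of $P_{\mathcal{G}_3}$ partition $\mathcal{G}_3$), and the computation above identifies that entry uniquely as $p_{\epsilon-3\varphi+6,\;\varphi-4}$.

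There is no real obstacle here; the statement is essentially a translation between two parametrisations of the same pair of integers $(|V_G|,|E_G|)$, with the handshake lemma supplying the link between $\epsilon(s)$ and $|E_G|$. The only point worth mentioning is that one should explicitly invoke the hypothesis $G\in\mathcal{G}_3$ to guarantee that the indices $(\epsilon-3\varphi+6,\varphi-4)$ actually correspond to a non-empty entry of $P_{\mathcal{G}_3}$ containing $G$, rather than an empty cell of the matrix.
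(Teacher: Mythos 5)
Your proposal is correct and follows essentially the same route as the paper's own proof: identify $\varphi=|V_G|$ and $\epsilon=|E_G|$ via the handshake lemma, then invert the defining relations $|V_G|=j+4$ and $|E_G|=i+3j+6$ to obtain $j=\varphi-4$ and $i=\epsilon-3\varphi+6$. Your closing remark about invoking $G\in\mathcal{G}_3$ to ensure the entry is non-empty is a minor but sound point that the paper leaves implicit.
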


\begin{proof} As $s$ is the degree sequence $d(G)$ of some graph $G\in \mathcal{G}_{3}$ then $|V_G|$ is the number of terms in $s$ which is $\varphi$ and $|E_G|$ is half the sum of the degrees of all vertices in $G$ which is exactly $\epsilon$. Recall that each entry $p_{i,j}$ in $P_{\mathcal{G}_{3}}$ is defined as $\{G\in \mathcal{G}_{3} \; \mid \;  |V_{G}|=j+4\;\; {\textrm and } \;\; |E_{G}|= i+3j+6  \} $. By rearranging $|V_{G}|=\varphi=j+4$ and substituting into $|E_{G}|=\epsilon= i+3j+6 $ then $j=\varphi-4$ and $i=\epsilon -3\varphi +6$ hence $s=d(G)$ for some $G$ contained in $p_{\;\epsilon -3\varphi +6,\;\varphi-4}$.  
\end{proof}

\begin{lem}\label{EmpEnt} The non-empty entries in column $j=\varphi -4$ of $P_{\mathcal{G}_{3}}$ are 
\begin{itemize}
\item $p_{\frac{-3\varphi +12}{2},\; \varphi-4}$ to $p_{{\varphi -3 \choose 2},\; \varphi -4}$, inclusive, when $\varphi$ is even, and  
\item $p_{\frac{-3\varphi +13}{2},\; \varphi-4}$ to $p_{{\varphi -3 \choose 2},\; \varphi -4}$, inclusive, when $\varphi$ is odd.
\end{itemize}
\end{lem}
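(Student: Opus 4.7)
The plan is to translate the indexing of the entries in column $j=\varphi-4$ into a question about the possible edge counts of a $3$-connected simple graph on $\varphi$ vertices. By definition, $G\in p_{i,\varphi-4}$ iff $|V_G|=\varphi$ and $|E_G|=i+3(\varphi-4)+6=i+3\varphi-6$. Hence $p_{i,\varphi-4}\ne\varnothing$ iff there exists a $3$-connected simple graph on $\varphi$ vertices with exactly $i+3\varphi-6$ edges, and the task reduces to determining the set of attainable edge counts and then solving for $i$.

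Next, I will establish the two extremal bounds. For the \emph{upper} bound, any simple graph on $\varphi$ vertices has at most $\binom{\varphi}{2}$ edges, with equality only for $K_\varphi$, which is $3$-connected for $\varphi\ge 4$; solving $i+3\varphi-6=\binom{\varphi}{2}$ gives $i=\binom{\varphi-3}{2}$, matching the stated top entry. For the \emph{lower} bound, a $3$-connected graph has minimum degree at least $3$, so the handshaking identity yields $|E_G|\ge\lceil 3\varphi/2\rceil$; substituting into $|E_G|=i+3\varphi-6$ gives $i\ge(-3\varphi+12)/2$ when $\varphi$ is even and $i\ge(-3\varphi+13)/2$ when $\varphi$ is odd. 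This recovers the claimed bottom entries.

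The remaining work is to exhibit graphs realising every intermediate entry. For attainability of the bottom entry, when $\varphi$ is even I will exhibit a $3$-regular $3$-connected graph on $\varphi$ vertices, for instance $K_4$ for $\varphi=4$ and the prism $C_{\varphi/2}\times K_2$ for $\varphi\ge 6$; when $\varphi$ is odd I will start from such a graph on $\varphi-1$ vertices and apply a single $(1,2)$-operation, which by Definition \ref{BGdef} produces a $3$-connected graph on $\varphi$ vertices with $\tfrac{3(\varphi-1)}{2}+2=\tfrac{3\varphi+1}{2}$ edges, i.e.\ an element of the claimed bottom entry. To pass from any entry $p_{i,\varphi-4}$ with $G\in p_{i,\varphi-4}$ and $G\not\cong K_\varphi$ to the next entry $p_{i+1,\varphi-4}$, I apply a $(0,1)$-operation: since $G$ is not complete, some non-edge exists, and adding it preserves $3$-connectedness (edge addition never decreases connectivity). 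Iterating from the bottom entry up to $K_\varphi$ therefore shows that every entry $p_{i,\varphi-4}$ in the stated range is non-empty.

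The main technical point is the attainability of the lower bound, and in particular producing a $3$-regular $3$-connected graph on every even number of vertices $\varphi\ge 4$; once this is in place, the $(1,2)$-operation handles the odd case and the $(0,1)$-iteration fills the interior, so the bulk of the remaining argument is routine.
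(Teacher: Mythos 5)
Your proposal is correct, and its skeleton---translate the column index into an edge count via the defining formula $|E_G| = i + 3(\varphi-4) + 6$, bound $\epsilon$ below by $\lceil 3\varphi/2\rceil$ using $\delta_G \geq 3$ and handshaking, and above by $\binom{\varphi}{2}$ using simplicity---is exactly the paper's. Where you genuinely diverge is in how much of the realisability is actually proved. The paper computes the two extremal rows and then ``summarises'' the intermediate entries in two tables whose middle rows are ellipses; it never exhibits a $3$-regular $3$-connected graph on each even $\varphi$, never realises the $\{4,3,\dots,3\}$ sequence for odd $\varphi$, and never argues that every edge count strictly between $\lceil 3\varphi/2\rceil$ and $\binom{\varphi}{2}$ is attained (a remark after Corollary \ref{EntCorr} merely notes that a BG-operation justification ``is possible''). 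You supply all three: the prism $C_{\varphi/2}\times K_2$ (and $K_4$) for the even bottom entry, a single $(1,2)$-operation to reach the odd bottom entry with $\frac{3(\varphi-1)}{2}+2=\frac{3\varphi+1}{2}$ edges, and an iterated $(0,1)$-operation to climb from the bottom entry to $K_\varphi$, using the fact that adding an edge to a non-complete $3$-connected graph preserves $3$-connectedness. This is precisely the BG-operation argument the paper defers, and it closes a real gap: without it, ``inclusive'' in the statement of the lemma is asserted rather than proved. Your version is therefore the more complete of the two; the only cost is the need to verify that the prism is $3$-connected for every $\varphi/2\geq 3$, which is standard.
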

\begin{proof} 
Observe that $\delta_G\geq 3$ for every $G\in \mathcal{G}_{3}$. It follows that the minimum possible $\epsilon$ for a degree sequence $s$ of any $3$-connected graph is $\epsilon=\frac{3\varphi}{2}$ i.e. $s=\underbrace{\{3,...,3\}}_{\varphi}$. Note that $\epsilon$ must be even by definition, however, this can only occur whenever $\varphi$ is even. In other words, there cannot exist a $3$-regular graph with an odd number of vertices. From Lemma \ref{Ent}, when $\varphi$ is even, the uppermost non-empty entry in column $j=\varphi -4$ of $P_{\mathcal{G}_{3}}$ is contained in row $\min\{\epsilon\} -3\varphi +6=\frac{3\varphi}{2}-3\varphi +6=\frac{-3\varphi+12}{2}$. \\

Observe that for any $G\in \mathcal{G}_{3}$ where $d(G)=s$ then $\Delta_G\leq \varphi-1$ as $G$ is simple. It follows that the maximum possible $\epsilon$ of any $3$-connected graph is $\epsilon={\varphi \choose 2}$ i.e. $s=\underbrace{\{\varphi-1,...,\varphi-1\}}_{\varphi}$. From Lemma \ref{Ent} the lowermost non-empty entry in column $j=\varphi -4$ of $P_{\mathcal{G}_{3}}$ is contained in row $\max\{\epsilon\} -3\varphi +6={\varphi \choose 2}-3\varphi +6=\frac{\varphi^2-7\varphi +12}{2}={\varphi-3 \choose 2}$. This argument is summarised in {\sc Figure} \ref{table1}.\\

 \begin{figure}[h]
{\renewcommand{\arraystretch}{1.5} 
\[ \begin{array}{c|l|c} 
 \varphi \; \textrm{even}  & \{s_1,\dots, s_n\} &   \epsilon \\ \hline
p_{\frac{-3\varphi +12}{2},\; \varphi-4} & \{3,\dots,3\}  &   \frac{3\varphi}{2}  \\
\vdots & \hspace{0.5cm}\vdots  &    \vdots \\
p_{\;{\varphi-3 \choose 2},\; \varphi -4} & \{n-1,\dots,n-1\} &  {\varphi \choose 2}  \\
\end{array}  \] }

\caption{All possible $3$-connected degree sequences of even length $\varphi$.}
\label{table1}
\end{figure}

If $\varphi$ is odd, note that the minimum possible $\epsilon$ for a degree sequence $s$ of any $3$-connected graphs is $\epsilon=\frac{3\varphi+1}{2}$ i.e. $s=\underbrace{\{4,3,...,3\}}_{\varphi}$. It follows from Lemma \ref{Ent} that, when $\varphi$ is odd, the uppermost non-empty entry in column $j=\varphi -4$ of $P_{\mathcal{G}_{3}}$ is contained in row $\min\{\epsilon\} -3\varphi +6=\frac{3\varphi+1}{2}-3\varphi +6=\frac{-3\varphi+13}{2}$. As the parity of $\varphi$ is irrelevant when maximising $\epsilon$ then the lowermost non-empty entry in column $j=\varphi -4$ of $P_{\mathcal{G}_{3}}$ is contained in row ${\varphi-3 \choose 2}$. This argument is summarised in {\sc Figure} \ref{table2}.

\begin{figure}[h]
{\renewcommand{\arraystretch}{1.5} 
\[  \begin{array}{c|l|c}
 \varphi \; \textrm{odd} & \{s_1,\dots, s_n\} &   \epsilon \\ \hline
p_{\frac{-3\varphi +13}{2},\; \varphi-4} & \{4,3,\dots,3\}  &  \frac{3\varphi +1}{2} \\
\vdots & \hspace{0.5cm}\vdots  &    \vdots \\
p_{\;{\varphi-3 \choose 2},\; \varphi -4} & \{n-1,\dots,n-1\} &   {\varphi \choose 2}  \\
\end{array}  \] }

\caption{All possible $3$-connected degree sequences of odd length $\varphi$.}
\label{table2}
\end{figure}

\end{proof}

\begin{cor}\label{EntCorr} The non-empty entries in column $j$ of $P_{\mathcal{G}_{3}}$ are 
\begin{itemize}
\item $p_{\frac{-3j}{2},\; j}$ to $p_{\frac{j^2 + j}{2},\; j}$, inclusive, when $j$ is even, and  
\item $p_{\frac{-3j+1}{2},\; j}$ to $p_{\frac{j^2 + j}{2},\; j}$, inclusive, when $j$ is odd.
\end{itemize}
\end{cor}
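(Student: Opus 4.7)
The plan is to derive this corollary directly from Lemma \ref{EmpEnt} by the substitution $\varphi = j+4$, which comes from the defining relation $j = \varphi - 4$ established in Lemma \ref{Ent}. Since the parity of $\varphi$ matches the parity of $j$ (because they differ by $4$), the case split in Lemma \ref{EmpEnt} transfers verbatim to a case split on $j$.

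First, I would handle the even case. Assuming $j$ is even, so is $\varphi$, and Lemma \ref{EmpEnt} gives that the non-empty entries in column $\varphi - 4 = j$ range from row $\frac{-3\varphi + 12}{2}$ to row $\binom{\varphi - 3}{2}$. Substituting $\varphi = j+4$ into the lower bound yields $\frac{-3(j+4)+12}{2} = \frac{-3j}{2}$, and substituting into the upper bound yields $\binom{j+1}{2} = \frac{j^2 + j}{2}$, which matches the claim.

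For the odd case, I would proceed identically: $j$ odd forces $\varphi$ odd, so the range becomes rows $\frac{-3\varphi+13}{2}$ through $\binom{\varphi-3}{2}$, and substituting $\varphi=j+4$ produces $\frac{-3j+1}{2}$ and $\frac{j^2+j}{2}$ as required.

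There is no real obstacle here; the entire content of the corollary is a reindexing of Lemma \ref{EmpEnt} from $\varphi$-coordinates (parameterising by the number of vertices of the realising degree sequence) to $j$-coordinates (parameterising by column index in $P_{\mathcal{G}_3}$). The only thing to be careful about is confirming that the parity translation is correct and that the arithmetic simplification of $\binom{j+1}{2}$ is stated in the same normal form as in the corollary statement.
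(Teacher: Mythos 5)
Your proposal is correct and is essentially identical to the paper's proof, which simply states that the result follows from Lemma \ref{EmpEnt} by letting $\varphi = j+4$; your explicit verification of the arithmetic and the parity correspondence is a faithful (if more detailed) rendering of the same argument.
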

\begin{proof} The result follows from Lemma \ref{EmpEnt} by letting $\varphi=j+4$.
\end{proof}

It follows that, when $j$ is odd, column $j$ in $P_{\mathcal{G}}$ has $\frac{j^2 - j}{2}+\frac{3j - 1}{2}+1=\frac{j^2+4j+1}{2}$ non-empty entries, for example, column $j=1$ has $\frac{1^2 +4(1)+1}{2}=3$ non-empty entries, as shown in {\sc Figure} \ref{Gamma}. Similarly, when $j$ is even, column $j$ in $P_{\mathcal{G}}$ has $\frac{j^2 - j}{2}+\frac{3j }{2}+1=\frac{j^2+4j+2}{2}$ non-empty entries, for example, column $j=2$ has $\frac{2^2 +4(2)+2}{2}=7$ non-empty entries, again see {\sc Figure} \ref{Gamma}.\\

Note that it is possible to provide alternative justifications for Lemma \ref{EmpEnt} and Corollary \ref{EntCorr} using the relationship between BG-operations and entries in $P_{\mathcal{G}_3}$ which is outlined in {\sc Figure} \ref{Poly2}.

\section{Results}

\begin{thm}\label{Main} Given a sequence $s=\{s_1,...,s_n\}$ of positive integers, with the associated pair $(\varphi, \epsilon)$, such that $s_i\geq s_{i+1}$ for $i=1,...,n-1$ then $s$ is $3$-connected if and only if 
\begin{itemize}
\item $\epsilon \in \mathbb{N}$,
\item $\frac{3\varphi}{2} \leq \epsilon \leq {{\varphi \choose 2}}$, 
\item $s_1\leq \varphi-1$ and $s_n\geq 3$.
\end{itemize}
\end{thm}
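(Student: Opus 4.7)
The plan is to prove the two implications separately. Necessity translates the defining properties of 3-connected simple graphs directly into constraints on the pair $(\varphi,\epsilon)$ and on the extremal entries of $s$, while sufficiency relies on the framework of $P_{\mathcal{G}_{3}}$ together with an inductive construction via BG-operations.

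For necessity, assume $s=d(G)$ for some $G\in\mathcal{G}_{3}$. Then $\epsilon=|E_G|\in\mathbb{N}$. Simplicity of $G$ forces $|E_G|\leq\binom{\varphi}{2}$ and $\Delta_G\leq\varphi-1$, which yield the upper bound on $\epsilon$ and $s_1\leq\varphi-1$. Since 3-connectedness implies $\delta_G\geq 3$, we obtain $s_n\geq 3$ and $2\epsilon=\sum_{i=1}^{\varphi} s_i\geq 3\varphi$, giving the lower bound on $\epsilon$.

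For sufficiency, observe that the stated conditions on $\epsilon$ are precisely those under which Lemma \ref{EmpEnt} guarantees that the entry $p_{\epsilon-3\varphi+6,\,\varphi-4}$ of $P_{\mathcal{G}_{3}}$ is non-empty, so at least one 3-connected simple graph with $\varphi$ vertices and $\epsilon$ edges exists. To upgrade this existence statement to a realisation with the specific degree sequence $s$, I would argue by induction on $\varphi$. The base case $\varphi=4$ forces $s=\{3,3,3,3\}$, realised by $K_4\in p_{0,0}$. For the inductive step, I would identify an inverse BG-operation tailored to the tail of $s$: since $s_n=3$, any realisation of $s$ contains a vertex $v$ of degree $3$, and removing $v$ (and suitably reattaching its three neighbours) corresponds to the inverse of a $(1,2)$- or $(2,3)$-operation, producing a shorter sequence $s'$ on $\varphi-1$ or $\varphi-2$ vertices. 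I would then verify that $s'$ still satisfies the three hypotheses, invoke the inductive hypothesis to obtain a 3-connected realisation of $s'$, and reapply the corresponding forward BG-operation (guaranteed to preserve 3-connectedness by Theorem \ref{BGthrm}) to recover a realisation of $s$.

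The main obstacle will be showing that the reduced sequence $s'$ remains within the feasible region after the reverse operation, and that the 3-connected realisation of $s'$ furnished by induction contains vertices of the correct residual degrees at which to perform the forward BG-operation that restores $s$. This requires a case analysis on the multiplicity of $3$'s at the tail of $s$ and on the gap $s_1-s_2$, and a degree-preserving edge-switching argument within the equivalence class of $s'$ may be needed so that the chosen realisation of $s'$ admits a BG-operation producing exactly $s$ rather than some other degree sequence in the same entry of $P_{\mathcal{G}_{3}}$.
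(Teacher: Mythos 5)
Your necessity argument is complete and essentially identical to the paper's. The gap is entirely in the sufficiency direction, and you have correctly sensed where it lies: knowing from Lemma \ref{EmpEnt} that the entry $p_{\epsilon-3\varphi+6,\,\varphi-4}$ is non-empty only tells you that \emph{some} $3$-connected graph with $\varphi$ vertices and $\epsilon$ edges exists, not one with degree sequence $s$. (For what it is worth, the paper's own proof does no better: its only sufficiency content is the assertion that realisability of the intermediate values of $\epsilon$ ``follows from'' the two tables in Lemma \ref{EmpEnt}, which list only the extremal sequences.) Your proposed repair --- induction on $\varphi$ via inverse BG-operations --- is left as a programme rather than a proof: the inductive step is only described when $s_n=3$ (a sequence such as $\{4,4,4,4,4\}$ has no degree-$3$ vertex to delete, so inverse $(0,1)$-operations would also be needed), the choice of which terms of $s$ to decrement when forming $s'$ is never specified, and the ``edge-switching argument'' needed to find a realisation of $s'$ that admits the correct forward operation is exactly the hard part and is not supplied.

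More seriously, the programme cannot be completed, because the sufficiency direction as stated is false: the three bullet conditions do not even imply that $s$ is graphic. Take $s=\{5,5,5,5,3,3\}$, so $\varphi=6$ and $\epsilon=13$. Then $\epsilon\in\mathbb{N}$, $\frac{3\cdot 6}{2}=9\leq 13\leq\binom{6}{2}=15$, $s_1=5=\varphi-1$ and $s_6=3$; yet in a simple graph on $6$ vertices the four vertices of degree $5$ would each be adjacent to all five others, forcing the remaining two vertices to have degree at least $4$. So no simple graph, let alone a $3$-connected one, realises $s$. Any correct sufficiency statement must incorporate an Erd\H{o}s--Gallai-type condition, and the ``main obstacle'' you flag --- verifying that the reduced sequence $s'$ stays feasible and is realisable with the right local structure --- is precisely where any attempted proof, yours or the paper's, breaks down.
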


\begin{proof} ($\Rightarrow$) Clearly $\epsilon \in \mathbb{N}$ is a necessary condition for any sequence $s$ to be realisable as half the sum of the degrees in any graph is the number of edges in that graph and this must be a natural number.  The necessity of the condition $ \frac{3\varphi}{2}  \leq \epsilon \leq {{\varphi \choose 2}}$ follows from Lemma \ref{EmpEnt} noting that $\epsilon > \frac{3\varphi}{2}$ whenever $\varphi$ is odd. Finally, the necessity of the condition $s_1\leq \varphi-1$ follows from the definition of a simple graph and the need for $s_n\geq 3$ is due to the fact that every vertex in a $3$-connected graph has degree at least $3$. It follows from {\sc Figures} \ref{table1} and \ref{table2} in Lemma \ref{EmpEnt} that, for a fixed $\varphi$, all sequences with $\frac{3\varphi}{2} < \epsilon < {{\varphi \choose 2}}$ are also realisable.  \\
 
($\Leftarrow$) Suppose that $s=\{s_1,...,s_n\}$ is $3$-connected. This means that $s$ is the degree sequence of a $3$-connected graph $G$, hence $\sum\limits_{i=1}^{n}deg(v_i)=2|V_G|$ and so $\epsilon\in\mathbb{N}$. As $G$ is $3$-connected then $deg(v_i)\geq 3$ for all $i=1,...,n$ hence if $G$ is a minimal $3$-connected graph on $n$ vertices then $d(G)=\{3,...,3\}$ with $|E_{G}|=\frac{3n}{2}$ if $n$ is even or $d(G)=\{4,3,...,3\}$ with $|E_{G}|=\frac{3n+1}{2}$ if $n$ is odd, hence $s_n\geq 3$ and $\epsilon \geq \frac{3\varphi}{2}$. As $G$ is simple then $deg(v_i)\leq n-1$ for all $i=1,...,n$ and the maximal simple ($3$-connected) graph on $n$ vertices is the complete graph $K_n$ which has the degree sequence $\{n-1,...,n-1\}$ and $|E_{K_n}|={n \choose 2}$, hence $s_1\leq n-1$ and $\epsilon \leq {\varphi \choose 2}$.   
\end{proof}

Before stating the next result the following definition is required. 

\begin{defn} A finite sequence $s=\{s_1,...,s_n\}$ of positive integers is called {\it necessarily $3$-connected} if $s$ can only be realisable as a $3$-connected (simple) graph.
\end{defn}

\begin{thm}\label{Crry1} Given a sequence $s=\{s_1,...,s_n\}$ of positive integers, with the associated pair $(\varphi, \epsilon)$, such that $s_i\geq s_{i+1}$ for $i=1,...,n-1$ then $s$ is necessarily $3$-connected if and only if $s$ is $3$-connected and 
$\epsilon > {{\varphi -2 \choose 2}} + 5.$
\end{thm}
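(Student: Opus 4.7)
The plan is to bound the maximum number of edges in a non-$3$-connected simple graph on $\varphi$ vertices with minimum degree at least $3$. Since Theorem \ref{Main} guarantees that any realization of a $3$-connected sequence satisfies $\delta \geq 3$, such a bound will control $\epsilon$ on any non-$3$-connected realization and drive both directions of the equivalence.

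For the sufficient direction ($\Leftarrow$), I would assume $s$ is $3$-connected with $\epsilon > {\varphi - 2 \choose 2} + 5$, take any simple realization $G$ of $s$, and suppose for contradiction that $G$ is not $3$-connected. Then $G$ is either disconnected or admits a vertex cut $S$ of size one or two, and I would split into three cases. If $G$ is disconnected, every component contains at least $4$ vertices, giving $|E_G| \leq {4 \choose 2} + {\varphi - 4 \choose 2}$. If $|S|=1$, each of the two sides must have at least $3$ vertices, giving $|E_G| \leq {4 \choose 2} + {\varphi - 3 \choose 2}$. If $|S|=2$, each component of $G - S$ has at least $2$ vertices (a singleton component would force a vertex of degree at most $2$), and making both sides together with $S$ as dense as possible yields $|E_G| \leq {a + 2 \choose 2} + {\varphi - a \choose 2} - 1$ with $a + b = \varphi - 2$ and $a, b \geq 2$, which is maximized at $a = 2$ and equals ${\varphi - 2 \choose 2} + 5$. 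A short computation would confirm each of these three bounds is at most ${\varphi - 2 \choose 2} + 5$, contradicting the hypothesis.

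For the necessary direction ($\Rightarrow$), the implication that $s$ being necessarily $3$-connected forces $s$ to be $3$-connected is immediate from the definition. For the edge-count inequality I would argue the contrapositive: given a $3$-connected sequence $s$ with $\epsilon \leq {\varphi - 2 \choose 2} + 5$, construct a simple realization of $s$ that has a $2$-vertex cut. The natural candidate mirrors the extremal configuration identified in the sufficient direction, namely a cut $\{u,v\}$ separating the remaining vertices into blocks of sizes $2$ and $\varphi - 4$, with every vertex of each block adjacent to both $u$ and $v$ and with the residual edges packed inside the larger block and possibly at $uv$.

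This construction step is the main obstacle, since the prescribed sequence $s$ need not coincide with the degree sequence of the "pure" extremal example. My approach would be to fix the two cut vertices and the two degree-$3$ vertices in the smaller block, then run a Hakimi-style realization on the residual degree demand inside the larger block together with $\{u,v\}$, checking that the residual sequence is graphic under the prescribed interface edges. Once feasibility is established, the resulting graph has $\{u,v\}$ as a $2$-cut by construction, hence fails to be $3$-connected, completing the implication.
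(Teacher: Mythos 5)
Your $(\Leftarrow)$ direction is correct and is essentially the paper's argument made more careful: the paper maximises $|E_G|$ only over graphs with a $2$-vertex cut (asserting that the extremal non-$3$-connected graph is maximally $2$-connected), whereas you also dispose of the disconnected and cut-vertex cases explicitly; all three of your bounds are valid and each is at most $\binom{\varphi-2}{2}+5$ once the minimum-degree-$3$ constraint is imposed.

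The gap is in $(\Rightarrow)$. You set out to prove the full contrapositive --- that \emph{every} $3$-connected sequence with $\epsilon\leq\binom{\varphi-2}{2}+5$ admits a realisation with a $2$-cut --- and you flag the feasibility of the construction as ``the main obstacle'' without closing it. It cannot be closed, because the statement you are aiming at is false. The sequence $\{3,3,3,3,3,3\}$ has $\epsilon=9\leq\binom{4}{2}+5=11$, yet its only realisations are $K_{3,3}$ and the triangular prism, both $3$-connected; likewise $\{3,3,3,3\}$ (only realisation $K_4$) and $\{4,3,3,3,3\}$ (every $5$-vertex graph of minimum degree $3$ is $3$-connected) sit exactly at $\epsilon=\binom{\varphi-2}{2}+5$ and are necessarily $3$-connected. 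Your prescribed interface --- a block of two degree-$3$ vertices attached to a $2$-cut $\{u,v\}$ --- already requires $5$ edges and pushes the degrees of $u$ and $v$ to at least $3$ each before the large block is touched, and sparse or regular sequences such as these cannot supply the demand; no alternative construction can succeed for them either. Note that the paper proves something strictly weaker in this direction: it exhibits the single boundary sequence $s'=\{n-1,n-1,n-3,\dots,n-3,3,3\}$ together with one $3$-connected and one merely $2$-connected realisation, which shows only that the threshold cannot be lowered for that particular $s'$, not the ``only if'' for arbitrary $s$. To produce a correct argument you should either present such a witness (accepting the weaker reading of the necessity claim) or add hypotheses excluding the small and sparse sequences above.
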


\begin{proof}  ($\Rightarrow$) Clearly it is necessary for $s$ to be $3$-connected if it is to be necessarily $3$-connected. It is required to show that it is necessary for $ \epsilon > {{\varphi -2 \choose 2}} +5$. Consider a sequence $s=\{s_1,...,s_n\}$ such that $ \epsilon = {{\varphi -2 \choose 2}} +5.$ Observe that one such sequence is $s'=\{n-1,n-1,n-3,...,n-3,3,3\}$ which has $(\varphi(s'),\epsilon(s'))=\left(n, \frac{(n-2)(n-3)+4+6}{2}\right)=\left(n,{n -2 \choose 2} +5\right)$. Observe that $s'=d(G_1)$, see {\sc Figure} \ref{graphs}, where $G_1= H_1\cup H_2$ such that $H_1\simeq K_4$, $H_2\simeq K_{n-2}$ and $H_1\cap H_2 \simeq K_2$ with $V_{H_1}=\{v_1,v_2, v_{n-1},v_n\}$, $V_{H_2}=\{v_1,...,v_{n-2}\}$ and $V_{H_1\cap H_2}=\{v_1,v_2\}$. Note that $G_1$ is $2$-connected as $G_1\setminus \{v_1,v_2\}$ is disconnected. \\

However, $s'=\{n-1, n-1, n-3,...,n-3,3,3\}$ is, in fact, $3$-connected as $s'$ is also the degree sequence of $G_2$, again see {\sc Figure} \ref{graphs}, noting that $v_iv_j\in E_{G_1}$ but $v_iv_j\not\in E_{G_2}$. Therefore, it is required that $ \epsilon > {{\varphi -2 \choose 2}} +5$ if $s$ is to be necessarily $3$-connected as the sequence $s'=\{n-1,n-1,n-3,...,n-3,3,3\}$ with $\epsilon(s')={{\varphi -2 \choose 2}} +5$ is realisable as a $2$-connected graph.  \\

\begin{figure}[h]
\begin{center}
\scalebox{0.9}{$\begin{xy}\POS (1,0) *\cir<2pt>{} ="a" *+!L{\;v_n},
 (-7.5,0) *\cir<2pt>{} ="b" *+!R{v_{n-1}},
(10.5,8) *\cir<2pt>{} ="c" *+!DR{v_1},
(10.5,-8) *\cir<2pt>{} ="d" *+!UR{v_2},
 (29.5,8) *\cir<2pt>{} ="e" *+!DL{v_i},
(29.5,-8) *\cir<2pt>{} ="f" *+!UL{v_j},
  (-5,12)*+!{G_1},
  (19,0)*+!{K_{n-2}},
  
\POS "a" \ar@{-}  "b",
\POS "a" \ar@{-}  "c",
\POS "a" \ar@{-}  "d",
\POS "b" \ar@{-}  "c",
\POS "b" \ar@{-}  "d",
\POS "c" \ar@{-}  "d",
\POS "e" \ar@{-}  "f",

\POS(20,0),  {\ellipse(12.5,12.5)<>{}},

\POS (71,0) *\cir<2pt>{} ="a" *+!L{\;v_n},
 (62.5,0) *\cir<2pt>{} ="b" *+!R{v_{n-1}},
(80.5,8) *\cir<2pt>{} ="c" *+!DR{v_1},
(80.5,-8) *\cir<2pt>{} ="d" *+!UR{v_2},
 (99.5,8) *\cir<2pt>{} ="e" *+!DL{v_i},
(99.5,-8) *\cir<2pt>{} ="f" *+!UL{v_j},
  (65,12)*+!{G_2},
  
\POS "a" \ar@{-}  "e",
\POS "b" \ar@{-}  "f",
\POS "a" \ar@{-}  "c",
\POS "a" \ar@{-}  "d",
\POS "b" \ar@{-}  "c",
\POS "b" \ar@{-}  "d",
\POS "c" \ar@{-}  "d",

\POS(90,0),  {\ellipse(12.5,12.5)<>{}},

 \end{xy}$}

\caption{ $d(G_1)=d(G_2)=s'=\{n-1,n-1,n-3,...,n-3,3,3\}$.}
\label{graphs}
\end{center}
\end{figure}
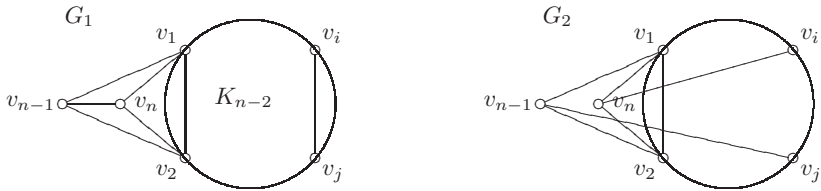

($\Leftarrow$) It is now required to show that if $s$ is $3$-connected and $ \epsilon > {{\varphi -2 \choose 2}} +5$ then $s$ is necessarily $3$-connected. To show this it is required to show that the maximum number of edges in a graph with $n$ vertices which is not $3$-connected is ${n-2 \choose 2} +5$. The graph $G_1$ in {\sc Figure} \ref{graphs} shows that such a graph exists, and so it remains to show that a graph with $ \epsilon = {{\varphi -2 \choose 2}} +5$ is maximally $2$-connected i.e. adding one edge will always result in a $3$-connected graph. \\
 
Observe that any maximally $2$-connected graph on $n$ vertices will necessarily contain a cut set with two vertices $\{u,v\}$ i.e. $G\setminus \{u,v\}$ is disconnected.  To maximise the number of edges in $G$ it is clear that $G\setminus \{u,v\}$ contains just two connected components i.e. $G=H_1 \cup H_2$ where $H_1\cap H_2=(\{u,v\},\{uv\})=C$ with $H_1\simeq K_{a+2}$, $H_2\simeq K_{b+2}$ and $H_1\cap H_2 \simeq K_2$ (noting that $a+b=n-2$). So, the task of maximising $|E_{H_1\setminus C}|+|E_{H_2\setminus C}|$ is equivalent to minimising the number of edges in a complete bipartite graph $K_{a,b}$ as $K_{n}\setminus (E_{H_1\cup H_2}) \simeq K_{a,b}$.\\

Let $a+b=n-2$, with $a\leq b$, then $|E_{K_{a,b}}|=ab$ where $a,b\in\{1,...,n-3\}$. Note that $a>0$ as $G\setminus \{u,v\}$ is disconnected i.e. $K_a\neq K_0=(\varnothing, \varnothing)$. It is straightforward to show that $ab$ attains its maximum at $a=b=\frac{n-2}{2}$, when $n$ is even, and at $a=\lfloor\frac{n-2}{2}\rfloor, b=\lceil\frac{n-2}{2}\rceil$ when $n$ is odd. It follows that $ab$ is minimised when $a=1$ and $b=n-3$. However, observe that $a>1$ as $a=1$ implies that $H_1\simeq K_3$ which means that $d(G)$ contains a term equal to $2$, but this contradicts the $s_n\geq 3$ condition. Hence $|E_{K_{a,b}}|$, with $a+b=n-2$, is minimised when $a=2$ and $b=n-4$ and so the maximal $2$-connected graph on $n$ vertices is isomorphic to $H_1\cup H_2$ where $H_1\simeq K_{4}, H_2\simeq K_{n-2}$ and $H_1\cap H_2\simeq K_2$, see $G_1$ in {\sc Figure} \ref{graphs}. Notice that the union of $G_1$ and any edge in $\overline{G_1}$, the complement of $G_1$, results in a $3$-connected graph. 
\end{proof}

\begin{cor} All simple graphs with $n$ vertices and at least $\frac{n^2-5n +18}{2}$ edges are $3$-connected.
\end{cor}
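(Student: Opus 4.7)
The plan is to reduce immediately to Theorem \ref{Crry1} by reformulating the edge-count hypothesis. A direct algebraic check gives
\[
\frac{n^2-5n+18}{2} \;=\; \frac{(n-2)(n-3)+12}{2} \;=\; \binom{n-2}{2}+6,
\]
so any simple graph $G$ on $n$ vertices with at least $\frac{n^2-5n+18}{2}$ edges satisfies $|E_G| > \binom{n-2}{2}+5$. Taking $s:=d(G)$ with associated pair $(\varphi,\epsilon)=(n,|E_G|)$, this is exactly the strict inequality appearing in Theorem \ref{Crry1}.

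What remains is to verify that $s$ itself is $3$-connected in the sense of Theorem \ref{Main}, after which Theorem \ref{Crry1} immediately gives that $s$ is necessarily $3$-connected, and hence that $G$ is $3$-connected. Three of the four conditions of Theorem \ref{Main} are routine: $\epsilon\in\mathbb{N}$ because it counts edges, $s_1\leq \varphi-1$ because $G$ is simple, and $\epsilon\leq \binom{\varphi}{2}$ for the same reason. The lower bound $\epsilon\geq \frac{3\varphi}{2}$ reduces to the quadratic inequality $n^2-8n+18\geq 0$, whose discriminant $64-72$ is negative, so the inequality holds for every $n$.

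The remaining condition $s_n\geq 3$ is the one step I expect to need genuine attention. If $G$ contained a vertex $v$ with $\deg(v)\leq 2$, then the other $n-1$ vertices would contribute at most $\binom{n-1}{2}$ edges, giving the bound $|E_G|\leq \binom{n-1}{2}+2 = \frac{n^2-3n+6}{2}$. I would compare this with the hypothesis $|E_G|\geq \frac{n^2-5n+18}{2}$ to rule out the low-degree case; this comparison is the only non-cosmetic obstacle in the argument, as the two expressions differ by $\frac{2n-12}{2}$, so the comparison is delicate for small $n$ and likely requires either an appeal to the small-$n$ regime being handled separately or a refinement of the degree bound. Once $\delta_G\geq 3$ is established, all four conditions of Theorem \ref{Main} hold, so $s$ is $3$-connected, and an application of Theorem \ref{Crry1} finishes the proof.
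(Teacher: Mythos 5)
Your route is genuinely different from the paper's: the paper does not pass through degree sequences at all, but simply quotes from the proof of Theorem \ref{Crry1} that a maximal non-$3$-connected graph on $n$ vertices is $K_4\cup K_{n-2}$ glued along a $K_2$, with $\binom{n-2}{2}+5$ edges, and observes that one additional edge forces $3$-connectedness. Your reduction through Theorems \ref{Main} and \ref{Crry1} is workable in outline, but the step you flag is a genuine gap, not a cosmetic one, and your own arithmetic shows why. The inequality you need, $\frac{n^2-5n+18}{2}>\binom{n-1}{2}+2=\frac{n^2-3n+6}{2}$, is equivalent to $n<6$, so the edge-count argument rules out a vertex of degree at most $2$ only for $n\le 5$; it is the large-$n$ regime, not the small-$n$ regime, that breaks (you have the direction of the difficulty reversed). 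Moreover the gap cannot be closed by a sharper estimate: for every $n\ge 6$ the graph consisting of $K_{n-1}$ together with one further vertex joined to exactly two vertices of the $K_{n-1}$ has $\binom{n-1}{2}+2\ge\frac{n^2-5n+18}{2}$ edges and is not $3$-connected, since deleting the two neighbours of the added vertex disconnects it. So the condition $s_n\ge 3$ simply does not follow from the edge hypothesis.

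Your stalled step is diagnostic rather than a defect of your approach alone. The paper's one-line proof avoids the issue only because the extremal claim it imports from Theorem \ref{Crry1} --- that $\binom{n-2}{2}+5$ is the maximum number of edges of a non-$3$-connected graph on $n$ vertices --- was derived there under the standing assumption $s_n\ge 3$ (that is precisely how the case $a=1$, i.e.\ $H_1\simeq K_3$, was excluded). Without a minimum-degree hypothesis the extremal non-$3$-connected graph on $n\ge 6$ vertices is $K_{n-1}$ plus a degree-$2$ vertex, with $\binom{n-1}{2}+2$ edges, so the statement as written needs either an added hypothesis $\delta_G\ge 3$ or the larger threshold $\binom{n-1}{2}+3$. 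In short: your proof has a real gap at exactly the point you identified, and for the statement as written that gap cannot be filled.
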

\begin{proof}  As shown in Theorem \ref{Crry1}, a maximal $2$-connected graph with $n$ vertices is isomorphic to the union of $H_1\simeq K_4$ and $H_2\simeq K_{n-2}$ where $H_1\cap H_2 \simeq K_2$ and such a graph has $|E_{K_{n-2}}|+|E_{K_4}|-|E_{K_2}| = {n -2 \choose 2} +{4 \choose 2} -{2 \choose 2}$ edges. It follows that any simple graph with $n$ vertices and at least $\left({{n -2 \choose 2}} +5\right)+1 = \frac{(n-2)(n-3)+12}{2}=\frac{n^2-5n+18}{2}$ edges is $3$-connected. 
\end{proof} 

Note that for all $n\in \mathbb{N}$, $n^2-5n$ is even and $n^2-5n+18>0$, hence $\frac{n^2-5n+18}{2}\in \mathbb{N}$.


\begin{thebibliography}{1}

\bibitem{BG2} Barnette, D. W. and Gr{\"u}nbaum, B., \textit{ On {S}teinitz's theorem concerning convex {$3$}-polytopes and
              on some properties of planar graphs, } The {M}any {F}acets of {G}raph {T}heory ({P}roc. {C}onf.,
              {W}estern {M}ich. {U}niv., {K}alamazoo, {M}ich., 1968)
 (1969), 27--40.

\bibitem{BM} Bondy, J. A. and Murty, U. S. R., \textit{ Graph theory, } Graduate Texts in Mathematics
\textbf{244} (2008), Springer.

\bibitem{D} Diestel, R., \textit{ Graph theory, } Graduate Texts in Mathematics
\textbf{173} (2000), Springer-Verlag.

\bibitem{EG} Erd\"os, P. and Gallai, T., \textit{ Graphs with prescribed degrees of vertices, } Mat. Lapok 
\textbf{11} (1960), 264--274 (in Hungarian).


\bibitem{GG} Gould, R., \textit{ Graph theory,} (1988), The Benjamin/Cummings Publishing Co. Inc. 

\bibitem{Hk} Hakimi, S. L., \textit{ On realizability of a set of integers as degrees of the
              vertices of a linear graph. I, } J. Soc. Indust. Appl. Math.
\textbf{10} (1962), 496--506.


\bibitem{Me15} McLaughlin, J., \textit{On connected degree sequences,  }  arXiv:1511.09321
 (2015).


\bibitem{Sch} Schmidt, J. M., \textit{ Construction sequences and certifying 3-connectivity, } Algorithmica
\textbf{62} (2012), 192--208.


\bibitem{T2} Tutte, W. T., \textit{ Connectivity in graphs, } Mathematical Expositions  
\textbf{15} (1966), University of Toronto Press, Toronto, Ont.; Oxford University
              Press, London.

\end{thebibliography}
\end{document}